\numberwithin{equation}{section}
\tikzset{cross/.style={cross out, draw=black, minimum size=2*(#1-\pgflinewidth), inner sep=0pt, outer sep=0pt},
cross/.default={4.5pt}}
\renewcommand{\leq}{\leqslant} 
\renewcommand{\epsilon}{\varepsilon}
\newcommand{\R}{\mathbb{R}}
\newcommand{\Z}{\mathbb{Z}}
\newcommand{\Q}{\mathbb{Q}}
\newcommand{\C}{\mathbb C}
\DeclareFontFamily{U}{mathx}{\hyphenchar\font45}
\DeclareFontShape{U}{mathx}{m}{n}{
      <5> <6> <7> <8> <9> <10>
      <10.95> <12> <14.4> <17.28> <20.74> <24.88>
      mathx10
      }{}
\DeclareSymbolFont{mathx}{U}{mathx}{m}{n}
\DeclareMathAccent{\widecheck}{0}{mathx}{"71}
\DeclareMathAccent{\wideparen}{0}{mathx}{"75}
\newtheorem{teo}{Theorem}[section]
\newtheorem*{teo*}{Theorem}
\newtheorem{lemma}[teo]{Lemma}
\newtheorem{prop}[teo]{Proposition}
\newtheorem*{prop*}{Proposition}
\newtheorem{cor}[teo]{Corollary}
\theoremstyle{definition}
\xpatchcmd{\@thm}{\thm@headpunct{.}}{\thm@headpunct{}}{}{}
\pgfplotsset{compat=1.13}
\begin{document}
\title{Denesting cubic radicals}
\author{Alberto Cavallo \\ \\
 \footnotesize{Institute of Mathematics of the Polish Academy of Sciences (IMPAN),}\\ 
 \footnotesize{Warsaw 00-656, Poland}\\ \\ \small{acavallo@impan.pl}}
\date{}

\maketitle
\begin{abstract}
 We study in details how and when the radical $\sqrt[3]{a+b\sqrt p}$ with rational numbers $a,b$ and $p$ positive can be simplified, providing a complete answer to the problem; furthermore, a program that computes the result is also made available. The solution highlights an interesting connection with the cubic formula. 
\end{abstract}

\section{Introduction}
Every high-school student is stumbled when they learn that a radical with nested squared roots, for example $\sqrt{3+2\sqrt 2}$, can actually be simplified and written in an equivalent form, involving less roots. In the latter case, we are looking for a positive real number whose square is equal to $3+2\sqrt 2$ and it is an easy exercise to show that such a number is $1+\sqrt 2$. 
On the other hand, they would find even more surprising the fact that already the presence of a single cubic root in the radical complicate the situation to the point of being, in most cases, even unreachable for them. 

Students have at this point enough knowledge to know the difference between real ($\R$) and complex ($\C$) numbers; they know that the radical $\sqrt[3]{7+5\sqrt 2}$ should be interpreted as a set, other than a number in $\C$, consisting of a unique real number and two complex conjugate numbers, and they are satisfied with the fact that a cubic root can just be considered an element $x$ of $\R$. 
Nonetheless, curious students understands that there should be a reason why a radical can be written in an apparently different, but equivalent expression and that this should highlight a symmetry of the number field itself. In particular, this is more clear when the number appearing in the radical belong to the field of the rational numbers ($\Q$).

In literature the problem of simplifying a nested radical has been studied extensively by many authors \cite{BFHT,Landau,Zippel} using Galois theory, working on the field extensions generated by the radicals. In this short note, we want to describe how to denest a specific family of cubic radicals in an elementary way.

We proceed to consider the real number $\sqrt[3]{a+b\sqrt p}$, where $a,b\in\Q$ and $p\in\Q_+$ is not a square. We want to determine when such a number can be expressed as a simpler radical; in other words, when we can write \begin{equation}
 \sqrt[3]{a+b\sqrt p}=A+B\sqrt p   
 \label{eq:1}
\end{equation} for some $A,B\in\Q$. 

Note that we can assume that $b$ and $B$ are both positive since a simple computation shows that 
\begin{equation}
 \left\{\begin{aligned}
    a&=A(A^2+3B^2p) \\
    b&=B(3A^2+B^2p)
 \end{aligned}\right.   
 \label{eq:relation}
\end{equation} which fixes the signs of $A$ and $B$. Our goal can be summarized in the terms of field extensions by the relation $[\Q(\sqrt[3]{a+b\sqrt p}):\Q]=2$, meaning that the field $\Q(\sqrt[3]{a+b\sqrt p})$ is a 2-dimensional vector space over the rational numbers.
If we set $\alpha=\sqrt[3]{a+b\sqrt p}$ then we obtain $\alpha^3=a+b\sqrt p$, which implies $(\alpha^3-a)^2=b^2p$. Let us denote $N=a^2-b^2p$; the minimal polynomial $m_\alpha(x)$ of $\alpha$ over the rational numbers is then a divisor of \[f(x)=x^6-2ax^3+N\:,\] and since we want $\Q(\alpha)$ to be of degree two over $\Q$ this polynomial needs to factor as $f(x)=m_\alpha(x)\cdot f_1(x)$ with $\deg(m_\alpha(x))=2$. Moreover, the discriminant of $f(\sqrt[3] x)$ is $\Delta=(2a)^2-4N=4b^2p>0$; hence, we have $\sqrt\Delta\notin\Q$. These observations lead us to study all the possible factorizations of the polynomials as $f(x)$, see Section \ref{section:three}.
\begin{teo}
    \label{teo:main}
 A cubic radical $\alpha=\sqrt[3]{a+b\sqrt p}$ with $a,b\in\Q,\:p\in\Q_+$ and $\sqrt p\notin\Q$ satisfies $\alpha=A+B\sqrt p$ for some $A,B\in\Q$ if and only if $N=a^2-b^2p$ is a rational cube and $R(x)=x^3-3Nx-2aN$ is reducible over $\Q$. 
 Furthermore, in this case $A$ and $B$ are given by $A=\frac{r}{2\sqrt[3]{N}}$ and $B=\frac{b\sqrt[3]{N^2}}{r^2-N}$, where $r$ is the unique rational root of $R(x)$.    
\end{teo}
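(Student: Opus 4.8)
The plan is to work inside the quadratic field $\Q(\sqrt p)$ and to exploit its nontrivial $\Q$-automorphism $\sigma\colon\sqrt p\mapsto-\sqrt p$. Alongside $\alpha=\sqrt[3]{a+b\sqrt p}$ (the real cube root) I would introduce its companion $\beta=\sqrt[3]{a-b\sqrt p}$, again the real cube root, and track the two elementary symmetric functions of $\alpha,\beta$. Their product satisfies $(\alpha\beta)^3=\alpha^3\beta^3=(a+b\sqrt p)(a-b\sqrt p)=N$, and since $\alpha\beta$ is real it is \emph{the} real cube root $n:=\sqrt[3]{N}$; their power sum is $\alpha^3+\beta^3=2a$. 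Setting $s=\alpha+\beta$ and using $\alpha^3+\beta^3=s^3-3(\alpha\beta)s$ yields the key relation $s^3-3ns-2a=0$. Multiplying by $n^3=N$ and writing $r=ns$ converts this into $r^3-3Nr-2aN=0$, i.e. $R(r)=0$; this substitution is exactly what rationalises the coefficient $n$ and explains the shape of $R$.

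For necessity, assume $\alpha=A+B\sqrt p$ with $A,B\in\Q$. Then $\alpha\in\Q(\sqrt p)$ and $\sigma(\alpha)=A-B\sqrt p$ is real with $\sigma(\alpha)^3=\sigma(a+b\sqrt p)=a-b\sqrt p$, so $\sigma(\alpha)=\beta$. Consequently $n=\alpha\beta=A^2-B^2p\in\Q$, whence $N=n^3$ is a rational cube, and $s=\alpha+\beta=2A\in\Q$, so $r=ns=2An$ is a rational root of $R$. Since a cubic over $\Q$ is reducible precisely when it has a rational root, $R$ is reducible; and $A=s/2=r/(2n)=r/(2\sqrt[3]{N})$, matching the asserted formula.

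For sufficiency, suppose $N=n^3$ is a rational cube (so $n=\sqrt[3]{N}\in\Q$) and $R$ has a rational root $r$. Define $A=\tfrac{r}{2n}$ and $B=\tfrac{bn^2}{r^2-N}$; writing $r=ns$ with $s=2A$ gives $r^2-N=n^2(s^2-n)$, hence $B=\tfrac{b}{s^2-n}$. The crux is to verify the product relation $A^2-B^2p=n$, which I would reduce to the polynomial identity $(s^2-4n)(s^2-n)^2=4(a^2-n^3)$. Substituting $2a=s^3-3ns=s(s^2-3n)$ into the right side and replacing $a^2-n^3$ by $b^2p$ via $N=a^2-b^2p$, both sides expand in the variable $u=s^2$ to $u^3-6nu^2+9n^2u-4n^3$, so the identity holds. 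Once $A^2-B^2p=n$ is in hand, the two equations of \eqref{eq:relation} follow at once: $A(A^2+3B^2p)=A(4A^2-3n)=\tfrac12 s(s^2-3n)=\tfrac12(s^3-3ns)=a$, and $B(3A^2+B^2p)=B(s^2-n)=b$. Thus $(A+B\sqrt p)^3=a+b\sqrt p$, and comparing real cube roots gives $\alpha=A+B\sqrt p$; the normalisation $b,B>0$ is consistent since $s^2-n=3A^2+B^2p>0$.

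Finally, for the uniqueness of the rational root I would compute the discriminant of $R$ directly: with $R(x)=x^3-3Nx-2aN$ one gets $\operatorname{disc}(R)=108\,N^2(N-a^2)=-108\,N^2b^2p$. Here $N\neq0$, for $N=0$ would force $\sqrt p=|a/b|\in\Q$; since also $p>0$, the discriminant is strictly negative, so $R$ has exactly one real root and a pair of complex conjugate roots. In particular its rational root is unique, justifying the statement of the theorem. I expect the genuine obstacle to be the identity $(s^2-4n)(s^2-n)^2=4(a^2-n^3)$: this is where all the arithmetic of the problem is concentrated, and it is precisely what makes the apparently over-determined definition of $B$ compatible with the product relation $A^2-B^2p=n$.
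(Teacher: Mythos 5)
Your proof is correct, and it takes a genuinely different route from the paper's. The paper recasts denesting as the field-theoretic condition $[\Q(\alpha):\Q]=2$, so that the minimal polynomial $m_\alpha(x)$ must appear as a quadratic factor of $f(x)=x^6-2ax^3+N$; it then relies on the classification of all prime factorizations of symmetric sextic trinomials over $\Q$ (Table \ref{factor} and Lemma \ref{lemma:factor}), together with the resolvent and discriminant machinery of Proposition \ref{prop:depressed} and Lemma \ref{lemma:negative}, and finally extracts $A$ and $B$ by solving the quadratic $m_\alpha(x)=x^2-\frac{r}{\sqrt[3]{N}}x+\sqrt[3]{N}$, deferring the algebraic verifications to Section \ref{section:four}. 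You bypass the sextic classification entirely: by introducing the conjugate radical $\beta=\sqrt[3]{a-b\sqrt p}$ and the automorphism $\sigma$ of $\Q(\sqrt p)$, necessity reduces to the symmetric-function computations $n=\alpha\beta=A^2-B^2p\in\Q$ and $s=\alpha+\beta=2A\in\Q$, with $r=ns$ a rational root of $R$; sufficiency reduces to the single identity $(s^2-4n)(s^2-n)^2=4(a^2-n^3)$ given $2a=s(s^2-3n)$, which is in substance the verification the paper relegates to Section \ref{section:four}. Your direct computation $\operatorname{disc}(R)=-108N^2b^2p<0$ replaces the appeal to Lemma \ref{lemma:negative} for uniqueness of the rational root. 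The trade-off: the paper's route yields more than the theorem --- the minimal polynomial of $\alpha$ in every case and the whole factorization table, which tie into its cubic-formula narrative --- while your route proves the theorem itself in a shorter, self-contained way, using nothing beyond uniqueness of real cube roots and conjugation in $\Q(\sqrt p)$.

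One point needs reordering, though it is not a real gap. In the sufficiency direction you define $B=\frac{bn^2}{r^2-N}$ before knowing $r^2-N\neq0$, and your closing remark that $s^2-n=3A^2+B^2p>0$ cannot serve as justification, since $B$ is itself defined in terms of $s^2-n$. The repair lies inside your own argument: the identity $(s^2-4n)(s^2-n)^2=4(a^2-n^3)=4b^2p>0$ involves only $s$, $n$, $a$, $b$, $p$ and not $B$, and it forces both $(s^2-n)^2>0$ and $s^2-4n>0$, which gives $s^2-n>0$ whether $n$ is positive or negative; so prove the identity first, conclude $r^2-N=n^2(s^2-n)>0$, and only then define $B$. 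This incidentally supplies a proof of the paper's unproved parenthetical claim that $r^2-N$ is always positive.
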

In the above setting the polynomial $R(x)$ has a unique rational root because of Lemma \ref{lemma:negative}, more enlightening discussions are in Sections \ref{section:two} and \ref{section:three}, and then we have $m_\alpha(x)=x^2-\frac{r}{\sqrt[3]{N}}x+\sqrt[3]{N}$ according to Table \ref{factor}.
We write $\beta^3=a-b\sqrt p$, where we denote by $\beta$ its unique cubic real root; hence, one has \[\sqrt[3]{N}=\alpha\beta\in\Q\] as a consequence of the fact that $m_\alpha(x)\in\Q[x]$.
Since $\beta$ can be written as the product of $\frac{1}{\alpha}$ and a rational number, we have that $\beta\in\Q(\alpha)$ and then $\beta$ is the other root of $m_\alpha(x)$; using the quadratic formula we obtain \begin{equation}
\alpha,\beta=\dfrac{1}{2}\cdot\left(\dfrac{r}{\sqrt[3]{N}}\pm\sqrt{\dfrac{r^2}{\sqrt[3]{N^2}}-4\sqrt[3]{N}}\right)=\dfrac{r\pm\sqrt{r^2-4N}}{2\sqrt[3]{N}}\:.
\label{eq:solution}
\end{equation}
We distinguish between $\alpha$ and $\beta$ by observing the sign appearing in the equation after developing the cube as follows
\[a\pm b\sqrt p=\left(\dfrac{r\pm\sqrt{r^2-4N}}{2\sqrt[3]{N}}\right)^3=\dfrac{1}{8N}\cdot\left(r^3\pm3r^2\sqrt{r^2-4N}+3r(r^2-4N)\pm\sqrt{(r^2-4N)^3}\right)\:,\] and since $\sqrt{r^2-4N}\notin\Q$ we obtain
\begin{equation}
  \left\{\begin{aligned}
   a&=\dfrac{r(r^2-3N)}{2N} \\
   b\sqrt p&=\left|\dfrac{r^2-N}{2N}\right|\cdot\sqrt{r^2-4N}
\end{aligned}\right.\:.
\label{eq:calc}
\end{equation}
We can use the second relation in Equation \eqref{eq:calc} to write $\sqrt{r^2-4N}=\left|\frac{2Nb}{r^2-N}\right|\cdot\sqrt p$, and substituting back in Equation \eqref{eq:solution} yields \[\alpha=\dfrac{r}{2\sqrt[3]{N}}+\dfrac{b\sqrt[3]{N^2}}{r^2-N}\cdot\sqrt p\hspace{0.5cm}\text{ or equivalently }\hspace{0.5cm}\left\{\begin{aligned}
    A&=\dfrac{r}{2\sqrt[3]{N}} \\
    B&=\dfrac{b\sqrt[3]{N^2}}{r^2-N}
\end{aligned}\right.\] which is the solution to our problem. Note that $r^2-N$ is always positive.

At this point, we can apply Theorem \ref{teo:main} to the radical $\sqrt[3]{7\pm5\sqrt 2}$, which was mentioned at the beginning of the note. We have that \[N=7^2-5^2\cdot2=-1\:,\] while \[R(x)=x^3+3x+14\hspace{1cm}\text{ whose unique rational root is }\hspace{1cm}r=-2\:.\] This leads to $A=1$ and $B=\pm1$ or, more explicitly, to \[\sqrt[3]{7\pm5\sqrt 2}=1\pm\sqrt2\:.\]  
An interesting observation is that determining whether a rational number is a cube, or whether a rational cubic polynomial is reducible, is a problem that can be solved algorithmically. Moreover, its complexity is equivalent to the one of the problem of finding the prime factorization of natural numbers. 
\begin{prop}
 Let $\frac{p}{q}\in\Q$ with $\gcd(p,q)=1$ and $h(x)=a_3x^3+a_2x^2+a_1x+a_0\in\Z[x]$. Then $\frac{p}{q}$ is a rational cube if and only if all the prime factors of $p$ and $q$ have multiplicities which are multiple of three; moreover, one has that $h(x)$ is reducible over $\Q$ if and only if $h(\frac{m}{n})=0$ for an $m$ which is a divisor of $a_0$ and an $n$ a divisor of $a_3$.
\end{prop}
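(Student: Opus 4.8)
The statement splits into two independent claims---a characterization of rational cubes and a reducibility criterion for cubic polynomials---so the plan is to treat them separately, each by a short elementary argument resting on unique factorization in $\Z$.

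For the first claim, the plan is to reduce everything to unique factorization in $\Z$ together with the uniqueness of the representation of a rational number as a fraction in lowest terms. If $\frac{p}{q}=\left(\frac{s}{t}\right)^3$ with $\gcd(s,t)=1$, then $\frac{s^3}{t^3}$ is again in lowest terms because $\gcd(s^3,t^3)=1$; comparing with $\frac{p}{q}$ forces $p=s^3$ and $q=t^3$, with any sign absorbed into $s$. Hence $|p|$ and $|q|$ are perfect cubes, which is exactly the condition that every prime in their factorizations occurs with multiplicity divisible by three. Conversely, if this divisibility condition holds then $p$ and $q$ are themselves cubes of integers and $\frac{p}{q}$ is manifestly a rational cube. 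The only points requiring care are the bookkeeping of signs and the verification that coprimality is preserved under cubing.

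For the second claim, the first step is to observe that, because $h(x)$ has degree three, any nontrivial factorization over $\Q$ must contain a linear factor; thus $h(x)$ is reducible over $\Q$ if and only if it possesses a rational root. It then remains to pin down the possible rational roots, which is the Rational Root Theorem. The plan here is to write a putative root as $\frac{m}{n}$ in lowest terms, substitute into $h(x)=0$, and clear denominators to obtain $a_3m^3+a_2m^2n+a_1mn^2+a_0n^3=0$. Isolating the extreme terms gives $a_3m^3=-n\,(a_2m^2+a_1mn+a_0n^2)$ and $a_0n^3=-m\,(a_3m^2+a_2mn+a_1n^2)$; since $\gcd(m,n)=1$ yields $\gcd(m^3,n)=\gcd(n^3,m)=1$, these equalities force $n\mid a_3$ and $m\mid a_0$, respectively.

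I expect no serious obstacle, as both parts are classical; the subtlest points are merely the preservation of coprimality under taking cubes in the first part and the reduction of reducibility to the existence of a rational root in the second, which genuinely uses that the degree equals three.
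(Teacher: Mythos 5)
Your proposal is correct and is exactly the argument the paper has in mind: the paper states this proposition without a formal proof, its only justification being the remark that a degree-three polynomial is reducible over $\Q$ if and only if it has a rational root (your same reduction), while the unique-factorization characterization of rational cubes and the Rational Root Theorem are taken as classical. Your write-up simply supplies those standard details, correctly handling the two delicate points (coprimality of $s^3$ and $t^3$, and clearing denominators to get $m\mid a_0$ and $n\mid a_3$).
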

Note that a polynomial with rational coefficients can always be turned into one with integer coefficients, without changing its roots, by multiplying with the least common multiple of the denominators of the coefficients. In addition, the only way a degree three polynomial can be decomposed is as the product of a degree one with a degree two factor; therefore, the reducibility of such a polynomial is equivalent to find at least a root in the given field.  

We developed a software based on the algorithm explained above\footnote{Available at  \url{https://sites.google.com/view/albertocavallomath/misc}.} that can be used to determine whether a given cubic radical could be simplified and, in the case of an affirmative answer, to find the corresponding expression.

\paragraph*{Overview}
Section \ref{section:two} is devoted to explore the relation between the problem of the simplification of a cubic radical with the formula to find roots of a cubic equation. In Section \ref{section:three} we study the factorization of the symmetric sextic trinomials $x^6+cx^3+d$. Finally, in Section \ref{section:four} we explicitly verify that the solution we found in Theorem \ref{teo:main} is indeed correct.

\paragraph*{Acknowledgements}
The author is partially supported by the NCN, under the project "Selected topics in knot theory" led by Maciej Borodzik at IMPAN.

\section{How to recover the cubic formula}
\label{section:two}
The existence of a solving formula, involving only radicals, for a degree three polynomial $h(y)=y^3+a_2y^2+a_1y+a_0$ is a problem that fascinated mathematicians for centuries. 
A general method was first found in the 16th century by Tartaglia, who was later persuaded by Cardano to reveal his solution, since he initially decided to keep it as secret for himself. In 1539, Tartaglia agreed only under the condition that Cardano would never reveal it. 
Cardano noticed that Tartaglia's method sometimes required him to extract the square root of a negative number. He even included a calculation with these complex numbers in his book "Ars Magna", but he did not really understand it. Rafael Bombelli studied this issue in detail, and is therefore often considered as the discoverer of complex numbers, see \cite{Guilbeau,lNM} for more details. 
We are going to explain how to find the roots $y_{1,2,3}$ of $h(y)$ using a little known method, which it is interestingly related to the nested cubic radical we are studying.

The first step is standard: we apply the substitution $y=x-\frac{a_2}{3}$ which yields a new polynomial \[h_1(x)=x^3+Px+Q\:,\] called depressed cubic, where \[\left\{\begin{aligned}
   P&=\dfrac{3a_1-a_2^2}{3}  \\
   Q&=\dfrac{2a_2^3-9a_1a_2+27a_0}{27}
\end{aligned}\right.\]
and whose resulting roots $x_{1,2,3}$ are $y_{1,2,3}+\frac{a_2}{3}$. In this way, we reduce the problem to find the roots of a generic depressed cubic. When $P=0$ there is no difficulty involved, while the case when $P\neq0$ can be done by considering a different polynomial, of higher degree, but much easier to solve. More specifically, let us take the symmetric sextic trinomial \[h_2(x)=x^6-\dfrac{3Q}{P}x^3-\dfrac{P}{3}\:;\] whose six roots are easily expressed as $\alpha\zeta_3^i$ and $\beta\zeta_3^i$ for $i=0,1,2$, where \begin{equation}\alpha,\beta=\sqrt[3]{\dfrac{1}{2}\cdot\left(\dfrac{3Q}{P}\pm\sqrt\Delta\right)}\hspace{0.5cm}\text{ and }\hspace{0.5cm}\Delta=\dfrac{27Q^2+4P^3}{3P^2}\:.\label{eq:depressed}
\end{equation}
We recall that the symbol $\zeta_3$ denotes a primitive cubic root of unity; in other words, an element of an appropriate algebraically closed field such that $\zeta_3^2+\zeta_3+1=0$. Moreover, since in particular one has $\zeta_3^3=1$, we also have $\zeta_3^2=\zeta_3^{-1}$.
\begin{prop}
 \label{prop:depressed}
 If $\alpha,\beta=\sqrt[3]{\frac{-c\pm\sqrt{c^2-4d}}{2}}$ are two roots of the symmetric sextic trinomial $g(x)=x^6+cx^3+d$ then the three roots of the depressed cubic $R(x)=x^3-3dx+cd$ are $\alpha\beta(\alpha+\beta),\:\alpha\beta(\alpha\zeta_3+\beta\zeta_3^2)$ and $\alpha\beta(\alpha\zeta_3^2+\beta\zeta_3)$. 
\end{prop}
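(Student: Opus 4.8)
The plan is to verify directly that the three proposed expressions are roots of $R(x)=x^3-3dx+cd$ by exploiting the symmetric-function relations between $\alpha$ and $\beta$. The key observation is that since $\alpha,\beta=\sqrt[3]{\frac{-c\pm\sqrt{c^2-4d}}{2}}$, the cubes $\alpha^3$ and $\beta^3$ are precisely the two roots of the quadratic $t^2+ct+d=0$. Vieta's formulas on this quadratic then give the two fundamental relations
\begin{equation}
\alpha^3+\beta^3=-c \qquad\text{and}\qquad \alpha^3\beta^3=d\,,
\label{eq:vieta}
\end{equation}
the second of which yields $\alpha\beta=\sqrt[3]{d}$. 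These two identities are the only input I expect to need.

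**Identifying a uniform substitution.**

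Rather than treat the three roots separately, I would write them uniformly. Each proposed root has the shape $\alpha\beta(\alpha\zeta_3^{j}+\beta\zeta_3^{-j})$ for $j=0,1,2$, using $\zeta_3^2=\zeta_3^{-1}$. So I would set $u=\alpha\zeta_3^{j}$ and $v=\beta\zeta_3^{-j}$, observe that $uv=\alpha\beta\zeta_3^{j-j}=\alpha\beta$ is independent of $j$, and that $u^3+v^3=\alpha^3\zeta_3^{3j}+\beta^3\zeta_3^{-3j}=\alpha^3+\beta^3=-c$ is also independent of $j$ since $\zeta_3^{3}=1$. Thus each candidate root is $r_j=\alpha\beta\,(u+v)=\sqrt[3]{d}\,(u+v)$ where $u,v$ depend on $j$ but satisfy the $j$-independent relations $uv=\sqrt[3]{d}$ and $u^3+v^3=-c$. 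This reduces the problem to a single computation.

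**The core computation.**

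It then suffices to show that $r=\sqrt[3]{d}\,(u+v)$ satisfies $R(r)=0$ whenever $uv=\sqrt[3]{d}$ and $u^3+v^3=-c$. I would expand using the identity $(u+v)^3=u^3+v^3+3uv(u+v)$, giving
\begin{equation}
(u+v)^3=-c+3\sqrt[3]{d}\,(u+v)\,.
\label{eq:cubeexp}
\end{equation}
Now compute $r^3=d\,(u+v)^3=d\bigl(-c+3\sqrt[3]{d}\,(u+v)\bigr)=-cd+3\sqrt[3]{d^4}\,(u+v)=-cd+3d\cdot\sqrt[3]{d}\,(u+v)=-cd+3d\,r$, using $\sqrt[3]{d^4}=d\sqrt[3]{d}$ and $r=\sqrt[3]{d}\,(u+v)$. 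This rearranges to exactly $r^3-3dr+cd=0$, which is $R(r)=0$. Since the three values $r_0,r_1,r_2$ arise from the three distinct choices $j=0,1,2$, and a cubic has at most three roots, these are precisely all the roots of $R(x)$.

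**Anticipated obstacle.**

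The one point requiring care is the interpretation of $\sqrt[3]{d}=\alpha\beta$ and the consistency of the cube-root choices: $\alpha$ and $\beta$ are not independent cube roots but are pinned down so that $\alpha^3\beta^3=d$ forces $(\alpha\beta)^3=d$, and one must ensure that the same fixed value $\alpha\beta$ is used throughout rather than an arbitrary cube root of $d$. Once the uniform substitution $u=\alpha\zeta_3^{j},\,v=\beta\zeta_3^{-j}$ is in place, this bookkeeping is automatic because $uv=\alpha\beta$ exactly, with no spurious factor of $\zeta_3$. The remaining algebra in \eqref{eq:cubeexp} and the subsequent expansion of $r^3$ is routine.
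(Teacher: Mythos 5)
Your proposal is correct and takes essentially the same approach as the paper: both verify $R$ vanishes at each candidate root by expanding the cube via $(u+v)^3=u^3+v^3+3uv(u+v)$ and invoking the Vieta relations $\alpha^3+\beta^3=-c$, $\alpha^3\beta^3=d$. Your uniform substitution $u=\alpha\zeta_3^{j}$, $v=\beta\zeta_3^{-j}$ is merely a notational repackaging of the paper's computation, which handles the three cases at once by keeping the index $i$ explicit.
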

\begin{proof}
 For every $i=0,1,2$ we can write \[(\alpha\beta(\alpha\zeta_3^i+\beta\zeta_3^{-i}))^3-3d(\alpha\beta(\alpha\zeta_3^i+\beta\zeta_3^{-i}))+cd=\alpha^3\beta^3(\alpha\zeta_3^i+\beta\zeta_3^{-i})^3-3d\alpha\beta(\alpha\zeta_3^i+\beta\zeta_3^{-i})+cd\]   
 \[=\alpha^3\beta^3\big(\alpha^3+\beta^3+3\alpha\beta(\alpha\zeta_3^{i}+\beta\zeta_3^{-i})\big)-3d\alpha\beta(\alpha\zeta_3^i+\beta\zeta_3^{-i})+cd\]
 and since we can easily check that $\alpha^3+\beta^3=-c$ and $\alpha^3\beta^3=d$ the latter becomes \[-cd+3d\alpha\beta(\alpha\zeta_3^{i}+\beta\zeta_3^{-i})-3d\alpha\beta(\alpha\zeta_3^{i}+\beta\zeta_3^{-i})+cd=0\:.\]
\end{proof}
A few observations are in place. First, the polynomials $h_1(x)$ and $h_2(x)$ satisfy precisely the relation exposed in Proposition \ref{prop:depressed}. Second, when $h_1(x)$ has real coefficients, the roots $\alpha$ and $\beta$ can be chosen in the way that $\alpha\beta=\sqrt[3]{-\frac{P}{3}}$ is also real; moreover, if $\Delta>0$ then the choice is actually canonical since $\alpha$ and $\beta$ would be real numbers themselves. In general, there is no special choice for the radical $\sqrt[3]{-\frac{P}{3}}$, but every different choice of $\alpha$ and $\beta$ as in Equation \eqref{eq:depressed} would just result in a permutation of the three roots of $h_1(x)$.

Since we now know how to express the roots of $h_1(x)$ applying the resolvent $R(x)$ in Proposition \ref{prop:depressed}, we can write the cubic formula explicitly and observe that it coincides with the classical result:
\[x_{1,2,3}=\zeta_3^i\cdot\underbrace{\sqrt[3]{-\dfrac{Q}{2}+\sqrt{\dfrac{Q^2}{4}+\dfrac{P^3}{27}}}}_{\alpha'}+\:\zeta_3^{-i}\cdot\underbrace{\sqrt[3]{-\dfrac{Q}{2}-\sqrt{\dfrac{Q^2}{4}+\dfrac{P^3}{27}}}}_{\beta'}\hspace{0.5cm}\text{ for }\hspace{0.5cm}i=0,1,2\:.\]
As we saw before, the choice of the radicals $\alpha'$ and $\beta'$ is not random: say $\omega$ is a value of $\sqrt[3]{-\frac{P}{3}}$, fixed a priori; then we need to impose the condition $\frac{\alpha'}{\omega}\cdot\frac{\beta'}{\omega}=\omega$.   
\begin{lemma}
 \label{lemma:negative}
 A cubic polynomial $h(x)\in\R[x]$ has three distinct roots in $\R$ if and only if $\Delta<0$. In particular, we have that if $h(x)$ is completely reducible over $\Q$ then one has $\Delta\leq0$.  
\end{lemma}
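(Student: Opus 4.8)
The plan is to connect the quantity $\Delta$ from Equation \eqref{eq:depressed} with the classical discriminant of the cubic, and then to read off the sign information from the geometry of the roots. First I would reduce to the depressed cubic: the Tschirnhaus substitution $y=x-\frac{a_2}{3}$ used above translates all three roots by the same real constant, so it preserves both their reality and their pairwise distinctness; hence it suffices to treat $h(x)=x^3+Px+Q$ with real $P,Q$ (the degenerate case $P=0$ is immediate, since then $h$ has at most one real root and $\Delta$ does not occur). Recall that the classical discriminant of the depressed cubic is $D=-4P^3-27Q^2$, and comparing with $\Delta=\frac{27Q^2+4P^3}{3P^2}$ gives at once the identity $\Delta=-\frac{D}{3P^2}$. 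Since $3P^2>0$, the signs of $\Delta$ and $D$ are opposite, so the whole statement reduces to the sign behaviour of $D$.

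Next I would establish the standard trichotomy for $D$ starting from $D=(x_1-x_2)^2(x_2-x_3)^2(x_3-x_1)^2$. A real cubic always has at least one real root, and the remaining two are either both real or a complex--conjugate pair. If the three roots are real and distinct then every factor $(x_i-x_j)^2$ is strictly positive, so $D>0$; if there is a repeated root then one factor vanishes and $D=0$; and if $x_1\in\R$ while $x_3=\overline{x_2}\notin\R$, then $(x_2-x_3)^2<0$ while $(x_1-x_2)^2(x_1-x_3)^2=|x_1-x_2|^4>0$, so $D<0$. These three cases are mutually exclusive and exhaustive, hence $D>0$ is equivalent to having three distinct real roots. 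Translating through $\Delta=-\frac{D}{3P^2}$ yields exactly $\Delta<0\iff h$ has three distinct real roots, which is the first assertion.

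For the final sentence, suppose $h(x)$ is completely reducible over $\Q$, that is, it splits into linear factors with rational roots. Then all three roots are real, so no complex--conjugate pair occurs and we are in the first or second case above; equivalently $D\geq0$, whence $\Delta\leq0$. The point requiring the most care, and where I expect to spend the most effort, is the boundary case $D=0$: one must check that a repeated root of a real (indeed rational) cubic is necessarily real, which follows because a non-real repeated root would force its conjugate to be a root of equal multiplicity and push the total degree above three. Once this is settled the inequality $\Delta\leq0$ is immediate, and the degenerate case $P=0$ forces $Q=0$ and a triple root, consistent with the bound.
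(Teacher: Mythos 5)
Your proof is correct, but it takes a genuinely different route from the paper's. The paper never leaves its own machinery: it uses the parametrization of the three roots as $\alpha\beta(\alpha\zeta_3^i+\beta\zeta_3^{-i})$ from Proposition \ref{prop:depressed} and argues by complex conjugation --- if the three roots are real and distinct, then $\alpha$ and $\beta$ cannot both be real (conjugation would swap them, yet $\alpha\neq\beta$), which forces $\Delta<0$; conversely, $\Delta<0$ forces $\beta=\overline\alpha$, so each $\alpha\zeta_3^i+\beta\zeta_3^{-i}$ is self-conjugate, hence real, and distinctness follows since $\Delta\neq0$. You instead identify the paper's $\Delta$ with the classical discriminant via $\Delta=-\frac{D}{3P^2}$, where $D=-4P^3-27Q^2=\prod_{i<j}(x_i-x_j)^2$, and read everything off the standard sign trichotomy for $D$; your case analysis (distinct real roots give $D>0$, a repeated root gives $D=0$, a conjugate non-real pair gives $D<0$, and a repeated root of a real cubic must be real) is complete and sound. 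What your route buys: it is shorter, it explains conceptually why the paper's $\Delta$ carries the sign convention opposite to the usual discriminant, and you explicitly dispose of the degenerate case $P=0$ (where $\Delta$ is not even defined), which the paper's proof passes over silently. What the paper's route buys: it is self-contained (no appeal to the product formula for the discriminant, which you invoke as a known fact), and the conjugation analysis is reused immediately afterwards --- the corollary that $r=\alpha\beta(\alpha+\beta)$ can always be taken real, and the identification in Lemma \ref{lemma:factor} of the rational quadratic factor, both rest on knowing $\beta=\overline\alpha$ when $\Delta<0$, information your argument does not produce.
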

\begin{proof}
 We saw that the three roots of the corresponding polynomial $h_1(x)$ are $\alpha\beta(\alpha+\beta),\:\alpha\beta(\alpha\zeta_3+\beta\zeta_3^2)$ and $\alpha\beta(\alpha\zeta_3^2+\beta\zeta_3)$.  
 If $h(x)$ is completely reducible over $\R$ then $\alpha\beta(\alpha\zeta_3+\beta\zeta_3^2)$ and $\alpha\beta(\alpha\zeta_3^2+\beta\zeta_3)$ are real numbers, but \[\alpha,\beta\in\R\hspace{0.5cm}\text{ implies }\hspace{0.5cm}\alpha\zeta_3^i+\beta\zeta_3^{-i}=\overline{\alpha\zeta_3^i+\beta\zeta_3^{-i}}=\overline\beta\zeta_3^i+\overline\alpha\zeta_3^{-i}\hspace{0.5cm}\text{ for }\hspace{0.5cm}i=1,2\] which shows that $\alpha,\beta\notin\R$, because otherwise they would be unchanged under conjugation and by assumption $\alpha\neq\beta$; in turn, this gives that $\Delta$ is negative.

 Conversely, assume that $\Delta<0$ and denote by $r$ the root $\alpha\beta(\alpha+\beta)\in\R$. Then $\alpha,\beta\notin\R$ and $\beta=\overline\alpha$, since clearly $\beta^3=\overline\alpha^3$ which means $\beta=\overline\alpha\zeta_3^i$ for some $i$, and thus \[|\alpha|\cdot\zeta_3^i=\alpha\overline\alpha\zeta_3^i=\alpha\beta\in\R\] resulting in $i=0$. At this point, the same argument of the previous step shows that $\alpha\zeta_3^i+\beta\zeta_3^{-i}$ coincides with his conjugate for $i=1,2$, and then all the three roots are real; in addition, they are distinct because having multiple roots implies the vanishing of $\Delta$.
\end{proof}
The root $r=\alpha\beta(\alpha+\beta)$ of $h_1(x)$ is in some sense canonical when the polynomial has real coefficients, and then we know it has certainly at least one real root.
\begin{cor}
 Suppose that $h_1(x)\in\R[x]$ is a depressed cubic. Then we can choose $\alpha$ and $\beta$ in the way that $\alpha\beta$ and $\alpha+\beta$ are both in $\R$; in particular, we have $r\in\R$.  
\end{cor}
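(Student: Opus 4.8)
The plan is to recover $\alpha^{3}$ and $\beta^{3}$ as the roots of an explicit real quadratic and then choose the cube roots compatibly, splitting on the sign of $\Delta$ exactly as in the proof of Lemma \ref{lemma:negative}. Comparing the sextic $h_2(x)=x^6-\frac{3Q}{P}x^3-\frac P3$ with the normal form $g(x)=x^6+cx^3+d$ of Proposition \ref{prop:depressed} gives $c=-\frac{3Q}{P}$ and $d=-\frac P3$, both real. As recorded in that proof one has $\alpha^3+\beta^3=-c$ and $\alpha^3\beta^3=d$, so $\alpha^3$ and $\beta^3$ are precisely the two roots of the real quadratic $t^2+ct+d$, whose discriminant is $c^2-4d=\Delta$.

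First I would treat the case $\Delta\geq 0$. Here $\alpha^3$ and $\beta^3$ are real, so I simply take $\alpha$ and $\beta$ to be their real cube roots. Then $\alpha+\beta\in\R$ is immediate, while $\alpha\beta$ is a real number whose cube is $\alpha^3\beta^3=d=-\frac P3$, hence $\alpha\beta=\sqrt[3]{-\frac P3}\in\R$, recovering the canonical choice advertised before the lemma.

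Next, for $\Delta<0$ the roots $\alpha^3,\beta^3$ form a complex conjugate pair, say $\beta^3=\overline{\alpha^3}$. I would fix any cube root $\alpha$ of $\alpha^3$ and then set $\beta:=\overline\alpha$; this is legitimate because $\overline\alpha^{\,3}=\overline{\alpha^3}=\beta^3$, so $\overline\alpha$ is one of the three cube roots of $\beta^3$ (the others being $\overline\alpha\zeta_3$ and $\overline\alpha\zeta_3^2$). With this choice $\alpha\beta=\alpha\overline\alpha=|\alpha|^2\in\R$ and $\alpha+\beta=\alpha+\overline\alpha=2\,\mathrm{Re}(\alpha)\in\R$; moreover $\alpha\beta=|\alpha|^2>0$ with $(\alpha\beta)^3=d>0$, so again $\alpha\beta=\sqrt[3]{-\frac P3}$. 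In either case $\alpha\beta,\alpha+\beta\in\R$, and therefore $r=\alpha\beta(\alpha+\beta)\in\R$.

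The only genuinely delicate point is ensuring that a single choice of cube roots simultaneously makes both $\alpha\beta$ and $\alpha+\beta$ real, rather than forcing two incompatible selections. The resolution is that the constraint $\alpha\beta=\sqrt[3]{-P/3}$ determines $\beta$ from $\alpha$, and the short computation $|\alpha|^2=\sqrt[3]{-P/3}$ in the $\Delta<0$ case shows that this forced $\beta$ is exactly $\overline\alpha$, so reality of $\alpha+\beta$ comes for free. I should also dispatch the degenerate case $P=0$, where $h_2$ is undefined: there $h_1(x)=x^3+Q$ has the evident real root $\sqrt[3]{-Q}$, so the statement holds trivially.
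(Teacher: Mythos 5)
Your proof is correct and follows essentially the same route as the paper's: the same case split on the sign of $\Delta$, with real cube roots when $\Delta\geq 0$ and the conjugate-pair choice $\beta=\overline\alpha$ when $\Delta<0$. The only real difference is bookkeeping: the paper disposes of the case $\Delta<0$ by citing Lemma \ref{lemma:negative}, whose proof contains exactly your conjugation argument ($\beta=\overline\alpha\zeta_3^i$ forced to $i=0$ by reality of $\alpha\beta$), whereas you inline that argument and, as a bonus, explicitly cover the degenerate cases $\Delta=0$ and $P=0$ that the paper's proof passes over in silence.
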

\begin{proof}
  The fact that $\alpha\beta\in\R$ is clear, while to show that $r\in\R$ we consider first the case when $\Delta>0$ which implies that $\alpha,\beta\in\R$. After this we assume $\Delta<0$, but now we can apply Lemma \ref{lemma:negative} which tells us that all the three roots of $h_1(x)$ are in $\R$. 
\end{proof}

\begin{table}[ht]
\centering
 \begin{tabular}{| m{7.5cm} | m{7.5cm} |}
  \hline
  \begin{center} $x^6+cx^3+d$ \end{center} & 
  \begin{center} $\sqrt\Delta\notin\Q$ and either $\sqrt[3]{d}\notin\Q$ or $R(x)$ is irreducible over $\Q$   \end{center}\\
  \hline
  \begin{center} $(x^3-\alpha^3)(x^3-\beta^3)$ \end{center}&
  \begin{center} $\sqrt\Delta\in\Q$ and $\alpha,\beta\notin\Q$ \end{center}\\
  \hline
  \begin{center} $(x-\alpha)(x^2+\alpha x+\alpha^2)(x^3-\beta^3)$ \end{center}&
  \begin{center} $\sqrt\Delta\in\Q,\:\alpha\in\Q$ and $\beta\notin\Q$ \end{center}\\
  \hline
  \begin{center} $(x-\alpha)(x-\beta)(x^2+\alpha x+\alpha^2)(x^2+\beta x+\beta^2)$ \end{center}&
  \begin{center} $\sqrt\Delta\in\Q$ and $\alpha,\beta\in\Q$ \end{center}\\
  \hline
  \begin{center} $\begin{aligned}&\left(x^2-(\alpha+\beta)x+\alpha\beta\right)\\ \big(x^4&+(\alpha+\beta)x^3+[(\alpha+\beta)^2-\alpha\beta]x^2+\\ &+\alpha\beta(\alpha+\beta)x+\alpha^2\beta^2\big)\end{aligned}$\end{center}&
  \begin{center} $\sqrt\Delta\notin\Q,\:\sqrt[3]{d}\in\Q$ and $R(x)$ has a unique root in $\Q$ \end{center}\\
  \hline
  \begin{center} $\begin{aligned}&\left(x^2-(\alpha+\beta)x+\alpha\beta\right)\\ &\left(x^2-(\alpha\zeta_3+\beta\zeta_3^2)x+\alpha\beta\right)\\ &\left(x^2-(\alpha\zeta_3^2+\beta\zeta_3)x+\alpha\beta\right)\end{aligned}$\end{center}&
  \begin{center} $\sqrt\Delta\notin\Q,\:\sqrt[3]{d}\in\Q$ and $R(x)$ is completely reducible over $\Q$ \end{center}\\
  \hline
 \end{tabular}
 \caption{Every possible prime factorization of $g(x)$ over $\Q$.}
 \label{factor}
\end{table}

\section{Factorizing \texorpdfstring{$x^6+cx^3+d$}{x6+cx3+d} over the rationals}
\label{section:three}
Suppose that $g(x)=x^6+cx^3+d$ with $c,d\in\Q$, and take $\Delta=c^2-4d$. Then $g(x)$ has prime factorization over $\Q$ according to Table \ref{factor}. 

We call $\alpha$ and $\beta$ a choice of the cubic roots of $\frac{-c\pm\sqrt\Delta}{2}$. According to the latter notation we have that, in the field of complex numbers, the six roots of $g(x)$ are $\alpha\zeta^i_3$ and $\beta\zeta_3^i$ for $i=0,1,2$, where $\zeta_3$ is a primitive cubic root of unity, exactly as in Section \ref{section:two}. Note that \[\alpha^3+\beta^3=-c\hspace{1cm}\text{ and }\hspace{1cm}\alpha^3\beta^3=d\:;\] moreover, we set the polynomial $R(x)$ to be the resolvent introduced in the previous section.

In order to show that these are exactly the only possible ways to decompose $g(x)$ into prime factors we need the following lemma.
\begin{lemma}
\label{lemma:factor}
 If $g(x)$ as before is reducible over $\Q$ and $\sqrt\Delta\notin\Q$ then $\sqrt[3]{d}\in\Q$ and $R(x)$ is reducible over $\Q$.   
\end{lemma}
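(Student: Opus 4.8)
The plan is to funnel the whole problem into the single statement that $g$ possesses a root lying in the quadratic field $K=\Q(\sqrt\Delta)$; once such a root is in hand, both conclusions follow almost mechanically. Throughout I write $\sigma$ for the nontrivial automorphism of $K$ over $\Q$, the one sending $\sqrt\Delta$ to $-\sqrt\Delta$. Over $K$ the sextic splits as $g(x)=(x^3-\alpha^3)(x^3-\beta^3)$ with $\alpha^3=\frac{-c+\sqrt\Delta}{2}$ and $\beta^3=\frac{-c-\sqrt\Delta}{2}$, so every root $\rho$ of $g$ satisfies $\rho^3\in\{\alpha^3,\beta^3\}$, and since $\sqrt\Delta\notin\Q$ each of these values generates $K$.

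The first and most delicate task is to produce a root of degree $2$. The key observation is a constraint on the degree of an arbitrary root $\rho$: from $K=\Q(\rho^3)\subseteq\Q(\rho)$ we get $2\mid[\Q(\rho):\Q]$, while $\rho$ is a root of the sextic $g$, so $[\Q(\rho):\Q]\in\{2,4,6\}$; in particular no root is rational (that would force $\sqrt\Delta\in\Q$) and no root has odd degree. Now if $g$ is reducible over $\Q$, its irreducible monic factors are the minimal polynomials of their roots, hence have degrees in $\{2,4,6\}$ summing to $6$ with at least two factors present. The only partitions of $6$ into at least two even parts are $2+4$ and $2+2+2$, and both contain a part equal to $2$. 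Therefore $g$ has a root $\alpha$ with $[\Q(\alpha):\Q]=2$, and since $K\subseteq\Q(\alpha)$ with both fields of degree $2$ we conclude $\alpha\in K$. I expect this degree bookkeeping — ruling out odd degrees and showing that any nontrivial factorization is forced to expose a quadratic factor — to be the real content of the proof.

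With a root $\alpha\in K$ secured, I relabel so that $\alpha^3=\frac{-c+\sqrt\Delta}{2}$ (replacing $\sqrt\Delta$ by $-\sqrt\Delta$ if necessary) and set $\beta=\sigma(\alpha)$. Applying $\sigma$ gives $\beta^3=\frac{-c-\sqrt\Delta}{2}$, so $(\alpha,\beta)$ is a legitimate pair of cube roots in the sense of Proposition \ref{prop:depressed}. Since $\sigma$ has order $2$ it interchanges $\alpha$ and $\beta$, so the symmetric expressions $\alpha\beta$ and $\alpha+\beta$ are fixed by $\sigma$ and therefore lie in $\Q$. From $(\alpha\beta)^3=\alpha^3\beta^3=d$ together with $\alpha\beta\in\Q$ we read off $\sqrt[3]{d}=\alpha\beta\in\Q$, which is the first assertion.

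For the second assertion I invoke Proposition \ref{prop:depressed} directly: for the valid pair $(\alpha,\beta)$ it states that $\alpha\beta(\alpha+\beta)$ is a root of $R(x)=x^3-3dx+cd$. But $\alpha\beta$ and $\alpha+\beta$ are both rational, so $\alpha\beta(\alpha+\beta)\in\Q$ is a rational root of $R$, whence $R$ is reducible over $\Q$. Thus both conclusions are obtained, the only genuinely nontrivial input being the degree argument of the second paragraph; the concluding computations require nothing beyond the norm and trace of $K/\Q$ and the already-established form of the roots of $R$.
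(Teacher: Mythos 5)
Your proof is correct, and it establishes the crucial intermediate fact --- that $g$ admits a rational monic quadratic factor whose two roots form a legitimate pair of cube roots of $\frac{-c\pm\sqrt\Delta}{2}$ with rational sum and product --- by a genuinely different mechanism from the paper's. The paper argues inside $\C$ with the explicit roots $\alpha\zeta_3^i,\beta\zeta_3^j$: it excludes linear and cubic rational factors by showing that their constant terms (or the cubes thereof) would force $\sqrt\Delta\in\Q$, and then identifies which pairs of roots can sit together in a rational quadratic factor. You instead run everything through the tower $\Q\subset K=\Q(\sqrt\Delta)\subseteq\Q(\rho)$: since $\rho^3$ generates $K$ for every root $\rho$, all roots have even degree, so every irreducible factor has degree in $\{2,4,6\}$, and any factorization into at least two such parts ($2+4$ or $2+2+2$) must contain a quadratic one; its root lies in $K$, and the nontrivial automorphism $\sigma$ of $K/\Q$ simultaneously manufactures the partner cube root $\beta=\sigma(\alpha)$ and yields $\alpha+\beta,\alpha\beta\in\Q$ as the trace and norm of $\alpha$. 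Both arguments then finish identically via Proposition \ref{prop:depressed}. Your route buys two things: the case analysis and the roots of unity disappear entirely, and you sidestep a point the paper glosses over --- the paper asserts that the only pairs of roots with \emph{real} product have the form $\{\alpha\zeta_3^i,\beta\zeta_3^{-i}\}$, which is not literally true (for $g(x)=x^6-8$, which is reducible with $\sqrt\Delta=4\sqrt2\notin\Q$, the pair $\{\alpha\zeta_3,\alpha\zeta_3^2\}$ has product $\alpha^2=2$), and what actually rules such pairs out is the irrationality of their \emph{sum}, a condition your trace-and-norm argument handles automatically. What the paper's hands-on route buys in exchange is that it never invokes field automorphisms or multiplicativity of degrees, staying at the level of explicit complex arithmetic consistent with the elementary tone of the note.
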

\begin{proof}
  We prove that we can choose $\alpha$ and $\beta$ in the way that $g(x)$ has $x^2-(\alpha+\beta)x+\alpha\beta$ as divisor. 
  
  By assumption, the roots $\alpha$ and $\beta$ are not in $\Q$, since this would imply that $\Delta$ is. If $g(x)$ had a factor $h(x)$ of degree three then the roots of $h(x)$ can be either $\alpha,\:\alpha\zeta_3$ and $\alpha\zeta_3^2$ or $\alpha\zeta_3^k,\:\alpha\zeta_3^j$ and $\beta\zeta_3^i$; all the other cases are symmetric.
  \begin{itemize}
    \item Case 1: The zero degree term of $h(x)$ is \[\alpha^3=\dfrac{-c+\sqrt\Delta}{2}\] which is not    possible because $\sqrt\Delta\notin\Q$;
    \item Case 2: The cube of the zero degree term of $h(x)$ is \[\alpha^6\beta^3=d\alpha^3=d\cdot\dfrac{-c+\sqrt\Delta}{2}\] which is not possible for the same reason as Case 1.
  \end{itemize}
  Hence, we now know that $g(x)$ has a degree two divisor: its zero degree term is a rational number and the only pair of roots whose product is real are of the form $\alpha\zeta_3^i$ and $\beta\zeta_3^{-i}$. This proves our claim, up to the choice of $\alpha$ and $\beta$ we made at the beginning.
  
  From the latter discussion it follows that $\sqrt[3]{d}=\alpha\beta$ and $r=\alpha\beta(\alpha+\beta)$ are both rational numbers, and we saw in the previous section that $r$ is a root of $R(x)$.    
\end{proof}
The polynomial $g(x)$ can be either reducible or irreducible over $\Q$. If it were reducible then Lemma \ref{lemma:factor} would imply that Table \ref{factor} covers all the possible cases, while if $g(x)$ is irreducible then we know that $\sqrt\Delta\notin\Q$ and that we cannot have $\sqrt[3]{d}\in\Q$ and $R(x)$ reducible over $\Q$ at the same time, see \cite[Lemma 2.4]{Cavallo}.

\section{Sanity checks}
\label{section:four}
In the Introduction we proved Equation \eqref{eq:calc}, but the relations expressed there do not appear to be obvious since they consider quantities as $r$, which is a root of the resolvent $R(x)$, and the number $N$ that are not immediately related to the original radical $\alpha$. We then explicitly verify them.

The first equation is straightforward: in fact, the expression $a=\frac{r(r^2-3N)}{2N}$ can be rewritten as \[r^3-3Nr-2aN=0\] which holds because $r$ is precisely a root of $R(x)=x^3-3Nx-2aN$.
On the other hand, the second equation requires more involved computations. As before, we rewrite $b\sqrt p=\left|\frac{r^2-N}{2N}\right|\cdot\sqrt{r^2-4N}$ as \[r^6-6Nr^4+9N^2r^2-4N^2a^2=0\:,\] where we use that $N=a^2-b^2p$ by definition. In order for the latter equation to be satisfied, we then need to show that $r^2$ is a root of the cubic polynomial $x^3-6Nx^2+9N^2x-4N^2a^2$; as explained in Section \ref{section:two}, this is equivalent to ask that $r^2-2N$ is a root of the depressed cubic \[S(x)=x^3-3N^2x+2N^3-4N^2a^2\:.\] We can find the roots of $S(x)$, remember that $r^2-2N\in\Q$, by studying the symmetric sextic trinomial associated to $S(x)$, which is $x^6+(2N-4a^2)x^3+N^2$, whose real roots are $\sqrt[3]{2a^2-N\pm2a\sqrt{a^2-N}}$.

It is convenient to write the roots we previously found with a simpler expression: \[\sqrt[3]{a^2+b^2p\pm2a\sqrt{b^2p}}=\sqrt[3]{(a\pm b\sqrt p)^2}=\left(\sqrt[3]{a\pm b\sqrt p}\right)^2=\alpha^2,\:\beta^2\:.\] Therefore, we can now solve for the only rational root $s$ of $S(x)$, using the fact that $N=\alpha^3\beta^3$; we finally obtain \[s=\alpha^2\beta^2(\alpha^2+\beta^2)=\alpha^2\beta^2(\alpha^2+\beta^2+2\alpha\beta)-2N=\alpha^2\beta^2(\alpha+\beta)^2-2N=r^2-2N\] as we wanted.

Furthermore, we also show that the values of $A$ and $B$ we determined in the Introduction satisfy Equation \eqref{eq:relation}. First, we write \[A(A^2+3B^2p)=\dfrac{r}{2\sqrt[3]{N}}\cdot\left[\dfrac{r^2}{4\sqrt[3]{N^2}}+\dfrac{3b^2\sqrt[3]{N^4}p}{(r^2-N)^2}\right]=\dfrac{r^3}{8N}+\dfrac{3b^2Nrp}{2(r^2-N)^2}=\dfrac{3r+2a}{8}+\dfrac{3b^2rp}{2(r^2+2ar+N)}=\]
\[=\dfrac{3r^3+8ar^2+3Nr+4a^2r+2aN+12b^2rp}{8(r^2+2ar+N)}=\dfrac{8ar^2+12Nr+4a^2r+8aN+12b^2rp}{8(r^2+2ar+N)}=\]
\[=a+\dfrac{3r}{2}\cdot\dfrac{N-a^2+b^2p}{r^2+2ar+N}=a\:.\]
In the previous computation we used the fact that $r^3=3rN+2aN$ and $N=a^2-b^2p$. Second, we continue with \[B(3A^2+B^2p)=\dfrac{b\sqrt[3]{N^2}}{r^2-N}\cdot\left[\dfrac{3r^2}{4\sqrt[3]{N^2}}+\dfrac{b^2\sqrt[3]{N^4}p}{(r^2-N)^2}\right]=\dfrac{3br^2}{4(r^2-N)}+\dfrac{b^3N^2p}{(r^2-N)^3}=\]\[=\dfrac{3br^2(r^2-N)^2+4b^3N^2p}{4(r^2-N)^3}=b\cdot\dfrac{3r^2(r^4-2Nr^2+N^2)+4N^2(a^2-N)}{4r^6-12Nr^4+12N^2r^2-4N^3}=\]
\[=b\cdot\dfrac{3Nr^4+6aNr^3+3N^2r^2+4N^2(a^2-N)}{8aNr^3+12N^2r^2-4a^2N^2+4N^2(a^2-N)}=
b\cdot\dfrac{6ar^3+12Nr^2+6aNr+4N(a^2-N)}{8ar^3+12Nr^2-4a^2N+4N(a^2-N)}=\]
\[=b\cdot\left[1-\dfrac{2a(r^3-3Nr-2aN)}{8ar^3+12Nr^2-4N^2}\right]=b\:,\]
where we also used $r^4=3Nr^2+2aNr$ and $r^6=3Nr^4+2aNr^3$.

\end{document}